\numberwithin{equation}{section}
\newtheorem{theorem}{Theorem}[section]
\newtheorem{lemma}[theorem]{Lemma}
\newtheorem{proposition}[theorem]{Proposition}
\newtheorem{definition}[theorem]{Definition}
\newcommand{\ppi}{{\mbox{\boldmath$\pi$}}}
\newcommand{\ggamma}{{\mbox{\boldmath$\gamma$}}}
\newcommand{\sggamma}{{\mbox{\scriptsize\boldmath$\gamma$}}}
\newcommand{\diam}{\mathop{\rm diam}\nolimits}
\newcommand{\supp}{\mathop{\rm supp}\nolimits}
\renewcommand{\d}{{\mathrm d}}
\newcommand{\N}{\mathbb{N}}
\newcommand{\R}{\mathbb{R}}
\newcommand{\LL}{\mathscr{L}}
\newcommand{\MM}{\mathscr{M}}
\newcommand{\mm}{\mathfrak m}
\newcommand{\nn}{\mathfrak n}
\newcommand{\sfd}{{\sf d}}
\newcommand{\prob}[1]{\mathscr P(#1)}
\newcommand{\probt}[1]{\mathscr P_2(#1)}
\newcommand{\eps}{\varepsilon}
\newcommand{\geo}{{\rm{Geo}}}
\newcommand{\e}{{\rm{e}}}
\newcommand{\gopt}{{\rm{OptGeo}}}
\newcommand{\Tan}{{\rm Tan}}
\newcommand{\lims}{\varlimsup}
\newcommand{\limi}{\varliminf}
\newcommand{\res}{\mathop{\hbox{\vrule height 7pt width .5pt depth 0pt
\vrule height .5pt width 6pt depth 0pt}}\nolimits}
\newcommand{\CD}{{\sf CD}}
\newcommand{\RCD}{{\sf RCD}}
\title{Euclidean spaces as weak tangents of infinitesimally Hilbertian metric spaces with Ricci curvature bounded below}
\begin{document}

\author{Nicola Gigli\
   \thanks{University of Nice, \textsf{nicola.gigli@unice.fr}}
   \and
   Andrea Mondino
   \thanks{ETH, Zurich, \textsf{andrea.mondino@math.ethz.ch}}
   \and
   Tapio Rajala
   \thanks{University of Jyv\"askyl\"a, \textsf{tapio.m.rajala@jyu.fi}}
   }

\maketitle

\begin{abstract}
We show that in any infinitesimally Hilbertian $\CD^*(K,N)$-space at almost every point there exists a Euclidean weak tangent,
i.e. there exists a sequence of dilations of the space that converges to a Euclidean space in the pointed measured Gromov-Hausdorff topology.
The proof follows by considering iterated tangents and the splitting theorem for infinitesimally Hilbertian $\CD^*(0,N)$-spaces.
\end{abstract}

\textit{Keywords:} metric geometry, tangent space of a metric space, Ricci curvature.

\

\textit{Mathematics Subject Classification:} 51F99-53B99.

\tableofcontents

\section{Introduction}

Gromov-Hausdorff limits of Riemannian manifolds with Ricci curvature lower bounds, Ricci-limits for short, have been extensively studied in particular by Cheeger and Colding
in a series of papers
\cite{Cheeger-Colding96, Cheeger-Colding97I, Cheeger-Colding97II, Cheeger-Colding97III, Colding96a, Colding96b, Colding97}.
In \cite{Cheeger-Colding97I} they proved, among other things, that the tangent space at almost every point - intended as pointed Gromov-Hausdorff limit of rescaled spaces - of a Ricci-limit space is Euclidean, with dimension possibly
depending on the point. 
Only much later, in \cite{Colding-Naber12} Colding-Naber showed that in fact for any Ricci-limit space there exists $k \in \N$ 
such that at almost every point in the space the tangent cone is $\R^k$.
Notice however, that there can be points in the Ricci-limits where the tangent is not unique, see for instance \cite{Colding-Naber13} for  examples.

In \cite{Lott-Villani09}  and  \cite{Sturm06I, Sturm06II} Lott-Villani on one side and Sturm on the other independently pro- posed a definition of `having Ricci curvature bounded from below by $K$ and dimension bounded above by $N$' for metric measure spaces, these being called $\CD(K,N)$-spaces (in \cite{Lott-Villani09} only the cases $K = 0$ or $N = \infty$ were considered). Here $K$ is a real number and $N$ a real number at least one, the value $N = \infty$ being also allowed.

The crucial properties of their definition are the compatibility with the smooth Riemannian case and the stability w.r.t. measured Gromov-Hausdorff convergence. 

More recently, in \cite{BS2010} Bacher-Sturm proposed a variant of the curvature-dimension condition $\CD(K,N)$,
 called reduced curvature-dimension condition and denoted as $\CD^*(K,N)$ which, while retaining the aforementioned
 stability and compatibility, has better globalization and tensorialization properties. For the special case $K=0$
 we have $\CD(0,N)=\CD^*(0,N)$. For arbitrary $K$ we have that any $\CD(K,N)$-space is also $\CD^*(K,N)$, but
 the converse implication is currently not perfectly understood (it is known that $\CD^*(K,N)$ implies $\CD(K^*,N)$,
 where $K^*=\frac{N-1}{N} K \leq K$ but the equivalence is open; for some recent progress see
 \cite{Cavalletti-Sturm12} and \cite{Cavalletti12}).

From both the geometric and analytic perspective, a delicate issue concerning the $\CD(K,N)$ and $\CD^*(K,N)$ conditions is that they comprehend Finsler structures (see the last theorem in \cite{Villani09}), which after the works of Cheeger-Colding are known not to appear as Ricci-limit spaces.

To overcome this problem, in \cite{Ambrosio-Gigli-Savare11b} the first author  together with Ambrosio-Savar\'e introduced, specifically for the case $N=\infty$, a more restrictive condition which retains the stability properties w.r.t. measured Gromov-Hausdorff convergence and rules out Finsler geometries. This notion is called Riemannian curvature bound and denoted by $\RCD(K,\infty)$. According to the slightly finer axiomatization presented in \cite{AmbrosioGigliMondinoRajala}, by the authors and Ambrosio it can be presented as the reinforcement of the $\CD(K,\infty)$ condition with the requirement that the space is `infinitesimally Hilbertian' (see also \cite{Gigli12}), the latter meaning that the Sobolev space $W^{1,2}(X,\sfd,\mm)$ of real valued functions on $(X,\sfd,\mm)$ is Hilbert (in general it is only Banach).

In \cite{Ambrosio-Gigli-Savare11b}, \cite{AGSBaEm} (see also \cite{GigliKuwadaOhta10} for the first progresses in this direction) it has been shown that the $\RCD(K,\infty)$ condition is equivalent to the (properly written/understood) Bochner inequality
\[
\Delta\frac{|\nabla f|^2}{2}\geq \nabla f\cdot\nabla \Delta f+K|\nabla f|^2.
\] 
The non-trivial refinement of this result to the finite dimensional case has been carried out in  \cite{Erbar-Kuwada-Sturm13} and \cite{AMS} by Erbar-Kuwada-Sturm and  the second author together with Ambrosio-Savar\'e respectively, where it has been proved that `infinitesimal Hilbertianity plus $\CD^*(K,N)$' is equivalent to the Bochner inequality
\[
\Delta\frac{|\nabla f|^2}{2}\geq\frac{(\Delta f)^2}{N}+\nabla f\cdot\nabla \Delta f+K|\nabla f|^2,
\] 
(again, properly understood).

Although  infinitesimal Hilbertianity is a requirement  analytic in nature, it has been shown 
in \cite{Gigli13} that on infinitesimally Hilbertian $\CD^*(0,N)$-spaces the analog of the 
Cheeger-Colding-Gromoll splitting theorem holds (see also \cite{GigliMosconi}
for the Abresch-Gromoll inequality), thus providing a geometric property which fails on general $\CD(K,N)/\CD^*(K,N)$-spaces.
Unlike general $\CD(K,N)/\CD^*(K,N)$-spaces, infinitesimally Hilbertian $\CD^*(K,N)$-spaces are also known to be essentially non branching \cite{RajalaSturm}.

Still in the direction of understanding the geometry of infinitesimally Hilbertian $\CD^*(K,N)$-spaces,  a natural conjecture is that on such setting  the tangent spaces (i.e. pointed measured Gromov-Hausdorff limits of rescaled spaces)  are Euclidean at almost every point. Moreover, like for the Ricci-limits, the tangents
should be unique at almost every point. Here we make a step towards these 
conjectures by proving the following result:

\begin{theorem}\label{thm:euclideantangents}
 Let $K \in \R$, $1 \le N < \infty$ and  $(X,\sfd,\mm)$  an infinitesimally Hilbertian $\CD^*(K,N)$-space.
 Then at $\mm$-almost every $x \in X$ there exists $n \in \N$, $n \le N$, such that 
 \[
  (\R^n,\sfd_{E},\LL_n,0) \in \Tan(X,\sfd,\mm,x),  
 \]
 where $\sfd_{E}$ is the Euclidean distance and $\LL_n$ is the $n$-dimensional
 Lebesgue measure normalized so that $\int_{B_1(0)}1-|x|\,\d\LL_n(x)=1$.
\end{theorem}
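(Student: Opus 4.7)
The strategy, consistent with the abstract, is to combine the principle of iterated tangents with the splitting theorem for infinitesimally Hilbertian $\CD^*(0,N)$-spaces from \cite{Gigli13}. First I would establish that at $\mm$-a.e.\ $x \in X$ every $(Y, \sfd_Y, \mm_Y, y) \in \Tan(X, \sfd, \mm, x)$ is itself an infinitesimally Hilbertian $\CD^*(0,N)$-space: stability of the $\CD^*(K,N)$ condition under pointed measured Gromov-Hausdorff convergence, combined with the fact that rescaling distances by a factor $r \to 0$ turns the lower bound $K$ into $Kr^2 \to 0$, yields the curvature-dimension bound, while the infinitesimal Hilbertianity of tangents follows from the corresponding stability of the Hilbert structure under pmGH limits of $\CD^*$-spaces. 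Coupled with this, I would invoke the iterated tangents principle: at $\mm$-a.e.\ $x$, whenever $Y \in \Tan(X, x)$ and $Z \in \Tan(Y, y)$ for $\mm_Y$-a.e.\ $y \in Y$, then also $Z \in \Tan(X, x)$.

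The core of the argument is an extremal construction. At a generic such $x$ define
\[
  n_*(x) := \sup\bigl\{ n \in \N : \R^n \times Z \in \Tan(X,\sfd,\mm,x) \text{ for some pointed metric measure space } Z\bigr\},
\]
with the convention that $\R^0$ is a singleton (so $n_*(x) \ge 0$) and that $\R^n \times Z$ is equipped with the natural product distance and measure $\LL_n \otimes \mm_Z$. Since a factor $\R^n$ accounts for $n$ units of effective dimension under $\CD^*(0, N)$-tensorization, $n_*(x) \le N$. I claim that in any tangent realizing this supremum the factor $Z$ must be a singleton; this gives the statement, the specific normalization of $\LL_n$ being a routine consequence of how the reference measure on tangents is rescaled at each blow-up. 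Suppose instead that $Z$ has positive diameter. Then $Z$ is a non-trivial infinitesimally Hilbertian $\CD^*(0, N - n_*)$-space; at $\mm_Z$-a.e.\ $z \in Z$ the point $z$ lies in the relative interior of some non-constant geodesic of $Z$, so rescaling this geodesic around $z$ produces a tangent $W \in \Tan(Z, z)$ containing a line. The splitting theorem then yields $W \cong \R \times W'$; picking $z$ additionally in the full-measure set where the iterated tangents principle applies, we conclude that $\R^{n_* + 1} \times W' \in \Tan(X, \sfd, \mm, x)$, contradicting the maximality of $n_*(x)$.

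The main technical obstacle is the measure-theoretic bookkeeping: one must verify that the various ``almost every'' conditions---stability of the infinitesimally Hilbertian $\CD^*(0,N)$ property on tangents, validity of the iterated tangents principle, and lying in the interior of a geodesic---can be met simultaneously inside $X$ and, inductively, inside the $Z$-factor of the candidate maximizing tangent. A secondary delicate point is that the splitting theorem is applied to a tangent of $Z$ rather than to $Z$ itself, which is precisely why the iterated tangents principle must be invoked. Granted these technicalities, the bound $n_*(x) \le N$ forces the process to terminate in finitely many steps, producing the desired Euclidean weak tangent $(\R^{n_*(x)}, \sfd_E, \LL_{n_*(x)}, 0)$.
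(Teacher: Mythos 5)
Your proposal is correct and takes essentially the same route as the paper: prolongability of geodesics on $\CD^*$-spaces, stability of `infinitesimally Hilbertian $+$ $\CD^*$' under p-mGH limits (so that tangents are infinitesimally Hilbertian $\CD^*(0,N)$), Preiss/Le Donne iterated tangents, and the splitting theorem. The only real difference is organizational: you package the iteration as a maximality/contradiction argument via $n_*(x)$, while the paper builds the factors $\R\times X_1$, $\R\times X_2,\dots$ directly and traces back; these are equivalent since $n_*(x)\le N$ guarantees the process terminates.

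Two small imprecisions worth fixing. First, your phrasing of the iterated tangents principle is off: the relevant statement (Theorem \ref{thm:iteratedtangents}) is that for $\mm$-a.e.\ $x$, for \emph{every} $(Y,\sfd_Y,\nn,y)\in\Tan(X,\sfd,\mm,x)$ and \emph{every} $y'\in Y$, one has $\Tan(Y,\sfd_Y,\nn^{y'}_1,y')\subset\Tan(X,\sfd,\mm,x)$ --- not the conditional ``$Z\in\Tan(Y,y)$ for $\mm_Y$-a.e.\ $y$ implies $Z\in\Tan(X,x)$'' that you wrote. Second, the clause ``picking $z$ additionally in the full-measure set where the iterated tangents principle applies'' is a misstep: iterated tangents is invoked at the fixed generic $x\in X$, and the theorem then covers \emph{all} base points $y'=(0,z)$ in the tangent $Y=\R^{n_*}\times Z$; no genericity of $z$ is needed for it. What you do need genericity of $z$ for is only the prolongability lemma inside $Z$. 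You should also record the (easy but used) fact that $\R^{n_*}\times\Tan(Z,z)\subset\Tan(\R^{n_*}\times Z,(0,z))$, which is what converts the splitting $W\cong\R\times W'$ of a tangent of $Z$ into the element $\R^{n_*+1}\times W'$ of $\Tan(X,\sfd,\mm,x)$.
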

Here $\Tan(X,\sfd,\mm,x)$ denotes the collection of pointed measured Gromov-Hausdorff %, p-mGH for short,
limits of rescaled spaces centered at $x$. Notice that the normalization of the limit measure expressed in the statement plays little role  and depends only on the choice of renormalization of rescaled measures in the process of taking limits.

The idea for the proof of Theorem  \ref{thm:euclideantangents} is the one used by Cheeger-Colding in \cite{Cheeger-Colding97I}, namely to prove that $\mm$-a.e. point is the middle point of a non-constant geodesic, noticing that in the limit of blow-ups the space becomes an infinitesimally Hilbertian $\CD^*(0,N)$-space and the  geodesics a line, then to use the splitting to factorize a direction. At this point it is a matter of proving that one can factorize enough dimensions to deduce that the limit is really Euclidean. In order to do so, Cheeger-Colding used some additional geometric information that is currently unavailable in the non-smooth setting: this is why we can't really prove that every tangent is Euclidean but only the existence of such tangent space.

Instead, we use a crucial idea of Preiss \cite{P1987}, adapted by Le Donne \cite{LD2011} to the metric-measure setting, which states that on doubling metric-measure spaces `tangents of tangents are tangents themselves', see Theorem \ref{thm:iteratedtangents} for the precise statement. Notice that we report the proof of such result because Le Donne stated the theorem for pointed Gromov-Hausdorff convergence, while we need it for the pointed measured Gromov-Hausdorff topology. Yet, such variant presents no additional difficulties so that we will basically just follow Le Donne's argument keeping track of the measures involved.

Finally, we remark that given that Theorem \ref{thm:euclideantangents} is proved via such compactness argument, in fact we prove the following slightly stronger statement: for every sequence of scalings we have that for $\mm$-a.e. $x$ there exists a subsequence (possibly depending on $x$) converging to a Euclidean space.

\smallskip
\noindent {\bf Acknowledgment.}
A.M. acknowledges the support of the ETH fellowship, part of the work was written when he was supported by the ERC grant GeMeTheNES directed by Prof. Luigi Ambrosio.
T.R. acknowledges the support of the Academy of Finland project no. 137528.

\section{Preliminaries}\label{sec:preli}
\subsection{Pointed metric measure spaces}
The basic objects we will deal with throughout the paper are metric measure spaces and pointed metric measure spaces, m.m.s. and p.m.m.s. 
 for short. 
 
 For our purposes, a m.m.s. is a triple $(X,\sfd,\mm)$ where $(X,\sfd)$ is a complete and separable metric space and  $\mm$ is a boundedly finite (i.e. finite on bounded subset) non-negative complete Borel measure  on it.

We will mostly work under the assumption that the measure 
$\mm$ is \emph{boundedly doubling}, i.e. such that
\begin{equation}\label{eq:mmdoubl}
0< \mm(B_{2r}(x)) \leq C(R) \mm(B_r(x)),\qquad\forall x\in X,\ r\leq R,
\end{equation}
for some given constants $C(R)>0$ depending on $R>0$. Notice that the map $C:(0,\infty)\to(0,\infty)$ can, and will, be taken non-decreasing.

The bound  \eqref{eq:mmdoubl} implies that $\supp \mm=X$ and $\mm \neq 0$ and by  iteration one gets
\begin{equation}\label{eq:genDoub}
 {\mm(B_R(a))}\leq \mm(B_r(x))\big(C(R)\big)^{\log_2(\frac{r}{R})+2}, \qquad\forall 0<r\leq R,\ a\in X,\ x\in B_R(a).
\end{equation}
In particular, this shows that bounded subsets are totally bounded and hence that boundedly doubling spaces are proper. 

\smallskip

A p.m.m.s is a quadruple $(X,\sfd,\mm,\bar x)$ where $(X,\sfd,\mm)$ is a metric measure space and $\bar x\in \supp(\mm)$ is a given `reference' point. Two p.m.m.s.  $(X,\sfd,\mm, \bar{x})$,  $(X',\sfd',\mm', \bar{x}')$ are declared isomorphic provided there exists an isometry $T:(\supp(\mm),\sfd)\to(\supp(\mm'),\sfd')$ such that $T_\sharp\mm=\mm'$ and $T(\bar{x})=\bar{x}'$.

We say that a p.m.m.s. $(X,\sfd,\mm,\bar x)$ is normalized provided $\int_{B_1(\bar x)}1-\sfd(\cdot,\bar x)\,\d\mm=1$. Obviously, given any p.m.m.s. $(X,\sfd,\mm,\bar x)$ there exists a unique $c> 0$ such that $(X,\sfd,c\mm,\bar x)$ is normalized, namely $c:=(\int_{B_1(\bar x)}1-\sfd(\cdot,\bar x)\,\d\mm)^{-1}$.

We shall denote by $\MM_{C(\cdot)}$ the class of (isomorphism classes of) normalized p.m.m.s. fulfilling \eqref{eq:mmdoubl} for given non-decreasing $C:(0,\infty)\to(0,\infty)$.

\subsection{Pointed measured Gromov-Hausdorff topology and  measured tangents}

The definition of convergence of p.m.m.s. that we shall adopt is the following (see \cite{Villani09}, \cite{GigliMondinoSavare}
 and \cite{BuragoBuragoIvanov}):
\begin{definition}[Pointed measured Gromov-Hausdorff convergence]\label{def:conv}
A sequence $(X_i,\sfd_i,\mm_i,\bar{x}_i)$ is said to converge 
in the  pointed measured Gromov-Hausdorff topology (p-mGH for short) to 
$(X_\infty,\sfd_\infty,\mm_\infty,\bar{x}_\infty)$ if and only if there 
exists a separable metric space $(Z,\sfd_Z)$ and isometric embeddings  
$\{\iota_i:(\supp(\mm_i),\sfd_i)\to (Z,\sfd_Z)\}_{i \in \bar{\N}}$ such that
for every 
$\varepsilon>0$ and $R>0$ there exists $i_0$ such that for every $i>i_0$
\[
\iota_\infty(B^{X_\infty}_R(\bar{x}_\infty)) \subset B^Z_{\varepsilon}[\iota_i(B^{X_i}_R (\bar{x}_i))]  \qquad \text{and} \qquad  \iota_i(B^{X_i}_R(\bar{x}_i)) \subset B^Z_{\varepsilon}[\iota_\infty(B^{X_\infty}_R (\bar{x}_\infty))], 
\]
where $B^Z_\varepsilon[A]:=\{z \in Z: \, \sfd_Z(z,A)<\varepsilon\}$ for every subset $A \subset Z$, and 
\[
\int_Y \varphi \, \d ((\iota_i)_\sharp(\mm_i))\qquad    \to \qquad  \int_Y \varphi \, \d  ((\iota_\infty)_\sharp(\mm_\infty)) \qquad \forall \varphi \in C_b(Z), 
\]
where $C_b(Z)$ denotes the set of real valued bounded continuous functions with bounded support in $Z$.
\end{definition}
Sometimes in the following, for simplicity of notation, we will identify the spaces $X_i$ with 
their isomorphic copies $\iota_i(X_i)\subset Z$. 

It is obvious that this is in fact a notion of convergence for isomorphism classes of p.m.m.s., the following proposition also follows by standard means, see e.g. \cite{GigliMondinoSavare} for details:
\begin{proposition}\label{prop:comp}
Let $C:(0,\infty)\to (0,\infty)$ be a non-decreasing function. Then there exists a distance $\mathcal D_{C(\cdot)}$ on $\MM_{C(\cdot)}$ for which converging sequences are precisely those converging in the p-mGH sense. Furthermore, the space  $(\MM_{C(\cdot)},\mathcal D_{C(\cdot)})$  is compact.
\end{proposition}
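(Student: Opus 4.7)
The plan is to construct $\mathcal D_{C(\cdot)}$ \emph{extrinsically}, by simultaneously embedding pairs of p.m.m.s.\ into a common separable metric space and measuring closeness of balls and of measures at every scale, and then to extract p-mGH-convergent subsequences via a diagonal argument combining Gromov's precompactness on the metric side with Prokhorov's theorem on the measure side.

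\textbf{The distance.} For two p.m.m.s.\ $\mathcal X=(X,\sfd,\mm,\bar x)$ and $\mathcal X'=(X',\sfd',\mm',\bar x')$ in $\MM_{C(\cdot)}$ I would set
\[
\mathcal D_{C(\cdot)}(\mathcal X,\mathcal X') \;:=\; \inf_{(Z,\iota,\iota')}\sum_{R=1}^\infty 2^{-R}\min\{1,\,\Delta_R(\iota,\iota')\},
\]
where the infimum runs over separable metric spaces $(Z,\sfd_Z)$ together with isometric embeddings $\iota:\supp\mm\hookrightarrow Z$, $\iota':\supp\mm'\hookrightarrow Z$, and $\Delta_R$ is the sum of three terms: the reference-point distance $\sfd_Z(\iota(\bar x),\iota'(\bar x'))$, the Hausdorff distance in $Z$ between $\iota(B_R(\bar x))$ and $\iota'(B_R(\bar x'))$, and a bounded-Lipschitz distance between the pushforwards $\iota_\sharp(\mm\res B_R(\bar x))$ and $\iota'_\sharp(\mm'\res B_R(\bar x'))$. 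Symmetry is immediate, and the triangle inequality follows from the standard gluing of two intermediate spaces along a common isometric copy of the middle p.m.m.s.\ combined with the triangle inequalities for Hausdorff and for bounded-Lipschitz distances separately. Non-degeneracy, via a diagonal extraction in $R$, produces a measure-preserving isometry sending $\bar x$ to $\bar x'$. That $\mathcal D_{C(\cdot)}$-convergence coincides with Definition~\ref{def:conv} is then essentially a rephrasing, using that on uniformly bounded Radon measures weak convergence is metrized by any such bounded-Lipschitz distance on each ball.

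\textbf{Compactness.} Given $\{\mathcal X_n\}\subset\MM_{C(\cdot)}$, the normalization and doubling control $\mm_n(B_1(\bar x_n))$ from both sides: since
\[
1 \;=\; \int_{B_1(\bar x_n)}(1-\sfd(\cdot,\bar x_n))\,\d\mm_n \;\in\; \bigl[\tfrac{1}{2C(1)}\mm_n(B_1(\bar x_n)),\;\mm_n(B_1(\bar x_n))\bigr]
\]
(the lower bound uses $B_{1/2}(\bar x_n)\subset B_1(\bar x_n)$ and doubling), one obtains $1\le\mm_n(B_1(\bar x_n))\le 2C(1)$, and iterating \eqref{eq:genDoub} yields $\mm_n(B_R(\bar x_n))\le M(R)$ uniformly in $n$. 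The doubling also forces uniform $\eps$-covering estimates on each $B_R(\bar x_n)$, so Gromov's precompactness theorem (via embeddings into $\ell^\infty$) provides a subsequence and isometric embeddings $\iota_n:\supp\mm_n\hookrightarrow Z$ such that $\iota_n(B_R(\bar x_n))$ converges in Hausdorff distance uniformly on bounded regions and $\iota_n(\bar x_n)\to\bar x_\infty\in Z$. The uniform mass bounds together with tightness (supports confined to the precompact Hausdorff limits) allow Prokhorov's theorem, applied scale by scale and extracted diagonally in $R$, to produce a Radon measure $\mm_\infty$ on $Z$ with $(\iota_n)_\sharp\mm_n\to\mm_\infty$ weakly on every bounded set; set $X_\infty:=\supp\mm_\infty$.

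\textbf{Passage to the limit, and the main obstacle.} The normalization passes immediately because $(1-\sfd_Z(\cdot,\bar x_\infty))^+$ is continuous with bounded support and $\iota_n(\bar x_n)\to\bar x_\infty$. The genuinely delicate step, which I expect to be the principal technical hurdle, is the preservation of the doubling condition with the \emph{same} function $C(\cdot)$: for $x\in X_\infty$ approximated by $x_n\in\iota_n(X_n)$, the standard inequalities
\[
\mm_\infty(B_r(x)) \le \liminf_n\mm_n(B_r(x_n)), \qquad \limsup_n\mm_n(B_{2r}(x_n)) \le \mm_\infty(\overline{B}_{2r}(x)),
\]
only yield doubling with a closed ball on the larger radius and an open ball on the smaller. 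This open/closed asymmetry is resolved by verifying \eqref{eq:mmdoubl} first at the (co-countable) set of radii $r$ for which $\mm_\infty(\partial B_r(x))=0$ and then extending to all radii via the right-continuity of $r\mapsto\mm_\infty(B_r(x))$, without enlarging the constant. Once this is done, $(X_\infty,\sfd_Z|_{X_\infty},\mm_\infty,\bar x_\infty)\in\MM_{C(\cdot)}$ and both statements of Proposition~\ref{prop:comp} follow.
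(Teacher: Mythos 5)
Your proposal reconstructs exactly the route the paper gestures at and then outsources to \cite{GigliMondinoSavare}: an extrinsic distance via simultaneous embeddings into a common separable space, uniform mass control on $B_1$ from the normalization combined with the iterated doubling bound \eqref{eq:genDoub}, Gromov precompactness for the metric part and Prokhorov for the measures, and finally a verification that the doubling condition \eqref{eq:mmdoubl} survives the limit with the \emph{same} function $C(\cdot)$ by arguing first at radii whose spheres are $\mm_\infty$-null. So the approach is essentially the paper's. One small slip worth fixing: for open balls the map $r\mapsto\mm_\infty(B_r(x))$ is \emph{left}-continuous (monotone unions $B_{r_k}(x)\uparrow B_r(x)$ as $r_k\uparrow r$), not right-continuous, so the extension from the co-countable set of boundary-null radii to all $r\le R$ should be carried out by approximating $r$ from below; with that change the chain $\mm_\infty(B_{2r}(x))\le C(R)\,\mm_\infty(\overline B_r(x))=C(R)\,\mm_\infty(B_r(x))$ at good radii passes to all radii without enlarging the constant.
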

Notice that the compactness of  $(\MM_{C(\cdot)},\mathcal D_{C(\cdot)})$ follows by the standard argument of Gromov: the measures of spaces in  $\MM_{C(\cdot)}$ are uniformly boundedly doubling, hence balls of given radius around the reference points are uniformly totally bounded and thus compact in the GH-topology. Then weak compactness of the measures follows using the doubling condition again and the fact that they are normalized.

\bigskip

The object of study of this paper are measured tangents, which are defined as follows. Let  $(X,\sfd,\mm)$ be a m.m.s.,  $\bar x\in \supp(\mm)$ and $r\in(0,1)$; we consider the rescaled p.m.m.s. $(X,r^{-1}\sfd,\mm^{\bar{x}}_r,\bar x)$ where the measure $\mm^{\bar x}_r$ is given by
\begin{equation}
\label{eq:normalization}
\mm^{\bar x}_r:=\left(\int_{B_r(\bar x)}1-\frac 1r\sfd(\cdot,\bar x)\,\d\mm\right)^{-1}\mm.
\end{equation}
Then we define:
\begin{definition}[The collection of tangent spaces $\Tan(X,\sfd,\mm,\bar{x})$]
Let  $(X,\sfd,\mm)$ be a m.m.s. and  $\bar x\in \supp(\mm)$. A p.m.m.s.  $(Y,\sfd_Y,\nn,y)$ is called a
\emph{tangent} to $(X,\sfd,\mm)$ at $\bar{x} \in X$ if there exists a sequence of radii $r_i \downarrow 0$ so that
$(X,r_i^{-1}\sfd,\mm^{\bar{x}}_{r_i},\bar{x}) \to (Y,\sfd_Y,\nn,y)$ as 
$i \to \infty$ in the pointed measured Gromov-Hausdorff topology.

We denote the collection of all the tangents of $(X,\sfd,\mm)$ at 
$\bar{x} \in X$ by $\Tan(X,\sfd,\mm,\bar{x})$. 
\end{definition}
Notice that if $(X,\sfd,\mm)$ satisfies \eqref{eq:mmdoubl} for some non-decreasing $C:(0,\infty)\to(0,\infty)$, then $(X,r^{-1}\sfd,\mm^x_r,\bar x)\in \MM_{C(\cdot)}$ for every $\bar x\in X$ and $r\in(0,1)$ and hence   the compactness stated in Proposition \ref{prop:comp} ensures that the set $\Tan(X,\sfd,\mm,\bar{x})$ is non-empty. 

It is also worth to notice that the map
\[
(\supp(\mm),\sfd)\ni x\qquad\mapsto \qquad (X,\sfd,\mm^x_r,x),
\]
is (sequentially) continuous for every $r>0$, the target space being endowed with the p-mGH convergence.

\subsection{Lower Ricci curvature bounds}
Here we quickly recall those basic definitions and properties of spaces with lower Ricci curvature bounds that we will need later on.

We denote by $\prob X$ the space of Borel probability measures on the complete and separable metric space $(X,\sfd)$ and by $\probt X \subset \prob X$ the subspace consisting of all the probability measures with finite second moment.

For $\mu_0,\mu_1 \in \probt X$ the quadratic transportation distance $W_2(\mu_0,\mu_1)$ is defined by
\begin{equation}\label{eq:Wdef}
  W_2^2(\mu_0,\mu_1) = \inf_\sggamma \int_X \sfd^2(x,y) \,\d\ggamma(x,y),
\end{equation}
where the infimum is taken over all $\ggamma \in \prob{X \times X}$ with $\mu_0$ and $\mu_1$ as the first and the second marginal.

Assuming the space $(X,\sfd)$ to be geodesic, also the space $(\probt X, W_2)$ is geodesic. We denote  by $\geo(X)$ the space of (constant speed minimizing) geodesics on $(X,\sfd)$ endowed with the $\sup$ distance, and by $\e_t:\geo(X)\to X$, $t\in[0,1]$, the evaluation maps defined by $\e_t(\gamma):=\gamma_t$. It turns out that any geodesic $(\mu_t) \in \geo(\probt X)$ can be lifted to a measure $\ppi \in \prob{\geo(X)}$, so that $(\e_t)_\#\ppi = \mu_t$ for all $t \in [0,1]$. Given $\mu_0,\mu_1\in\probt X$, we denote by $\gopt(\mu_0,\mu_1)$ the space of all
$\ppi \in \prob{\geo(X)}$ for which $(\e_0,\e_1)_\#\ppi$ realizes the minimum in \eqref{eq:Wdef}. If $(X,\sfd)$ is geodesic, then the set $\gopt(\mu_0,\mu_1)$ is non-empty for any $\mu_0,\mu_1\in\probt X$.

We turn to the formulation of the $\CD^*(K,N)$ condition, coming from  \cite{BS2010}, to which we also refer for a detailed discussion of its relation with the $\CD(K,N)$ condition
 (see also \cite{Cavalletti-Sturm12} and \cite{Cavalletti12}).

Given $K \in \R$ and $N \in [1, \infty)$, we define the distortion coefficient $[0,1]\times\R^+\ni (t,\theta)\mapsto \sigma^{(t)}_{K,N}(\theta)$ as
\[
\sigma^{(t)}_{K,N}(\theta):=\left\{
\begin{array}{ll}
+\infty,&\qquad\textrm{ if }K\theta^2\geq N\pi^2,\\
\frac{\sin(t\theta\sqrt{K/N})}{\sin(\theta\sqrt{K/N})}&\qquad\textrm{ if }0<K\theta^2 <N\pi^2,\\
t&\qquad\textrm{ if }K\theta^2=0,\\
\frac{\sinh(t\theta\sqrt{K/N})}{\sinh(\theta\sqrt{K/N})}&\qquad\textrm{ if }K\theta^2 <0.
\end{array}
\right.
\]
\begin{definition}[Curvature dimension bounds]
Let $K \in \R$ and $ N\in[1,  \infty)$. We say that a m.m.s.  $(X,\sfd,\mm)$
 is a $\CD^*(K,N)$-space if for any two measures $\mu_0, \mu_1 \in \prob X$ with support  bounded and contained in $\supp(\mm)$ there
exists a measure $\ppi \in \gopt(\mu_0,\mu_1)$ such that for every $t \in [0,1]$
and $N' \geq  N$ we have
\begin{equation}\label{eq:CD-def}
-\int\rho_t^{1-\frac1{N'}}\,\d\mm\leq - \int \sigma^{(1-t)}_{K,N'}(\sfd(\gamma_0,\gamma_1))\rho_0^{-\frac1{N'}}+\sigma^{(t)}_{K,N'}(\sfd(\gamma_0,\gamma_1))\rho_1^{-\frac1{N'}}\,\d\ppi(\gamma)
\end{equation}
where for any $t\in[0,1]$ we  have written $(\e_t)_\sharp\ppi=\rho_t\mm+\mu_t^s$  with $\mu_t^s \perp \mm$.
\end{definition}
Notice that if $(X,\sfd,\mm)$ is a $\CD^*(K,N)$-space, then so is $(\supp(\mm),\sfd,\mm)$, hence it is not restrictive to assume that $\supp(\mm)=X$, a hypothesis that we shall always implicitly do from now on. It is also immediate to establish that
\begin{equation}
\label{eq:cdinv}
\begin{split}
&\text{If }(X,\sfd,\mm)\text{ is $\CD^*(K,N)$, then the same is true for $(X,\sfd,c\mm)$ for any }c>0.\\
&\text{If }(X,\sfd,\mm)\text{ is $\CD^*(K,N)$, then for $\lambda>0$ the space $(X,\lambda \sfd,\mm)$ is $\CD^*(\lambda^{-2}K,N)$}.
\end{split}
\end{equation}

On $\CD^*(K,N)$ a natural version of the Bishop-Gromov volume growth estimate holds (see \cite{BS2010} for the precise statement), it follows that  for any given $K\in\R$, $N\in[1,\infty)$ there exists a function $C:(0,\infty)\to(0,\infty)$ depending on $K,N$ such that any $\CD^*(K,N)$-space $(X,\sfd,\mm)$ fulfills \eqref{eq:mmdoubl}. 

In order to enforce, in some weak sense, a Riemannian-like behavior of spaces with a curvature-dimension bound, in \cite{Ambrosio-Gigli-Savare11b} (see also \cite{AmbrosioGigliMondinoRajala}, \cite{Gigli12}, \cite{AMS}, \cite{Erbar-Kuwada-Sturm13}, \cite{MonAng}), a strengthening of the $\CD^*(K,N)$ has been proposed: it consists in requiring that the space $(X,\sfd,\mm)$ is such that the Sobolev space $W^{1,2}(X,\sfd,\mm)$ is Hilbert, a condition we shall refer to as `infinitesimal Hilbertianity'. It is out of the scope of this note to provide full details about the definition of $W^{1,2}(X,\sfd,\mm)$ and its relevance in connection with Ricci curvature lower bounds. We will instead be satisfied in recalling the two crucial properties which are relevant for our discussion: the stability (see \cite{GigliMondinoSavare} and references therein) and the splitting theorem (see \cite{Gigli13}):
\begin{theorem}[Stability]\label{thm:stab}
Let $K\in \R$ and $N\in[1,\infty)$. Then the class of normalized p.m.m.s $(X,\sfd,\mm,\bar x)$ such that $(X,\sfd,\mm)$ is infinitesimally Hilbertian and $\CD^*(K,N)$ is closed (hence compact) w.r.t. p-mGH convergence.
\end{theorem}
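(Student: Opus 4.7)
The plan is to prove separately the stability of each of the two defining conditions: $\CD^*(K,N)$ and infinitesimal Hilbertianity. Fix a p-mGH convergent sequence $(X_n,\sfd_n,\mm_n,\bar{x}_n)\to (X_\infty,\sfd_\infty,\mm_\infty,\bar{x}_\infty)$ and realize it via isometric embeddings $\iota_n:\supp(\mm_n)\hookrightarrow Z$ into a common Polish metric space $(Z,\sfd_Z)$, identifying each $X_n$ with its image. The doubling function $C$ of Proposition \ref{prop:comp} can be chosen depending only on $K$ and $N$ by Bishop--Gromov, so the entire argument lives in a fixed compact subclass of $\MM_{C(\cdot)}$.

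For the $\CD^*(K,N)$ part I would follow the Sturm/Bacher--Sturm scheme. Given target measures $\mu_0^\infty,\mu_1^\infty \in \prob{X_\infty}$ with bounded support in $\supp(\mm_\infty)$, first produce approximations $\mu_i^n \in \prob{X_n}$ with $(\iota_n)_\sharp\mu_i^n \to (\iota_\infty)_\sharp\mu_i^\infty$ narrowly in $Z$ and with uniformly bounded supports, for instance by convolving with a small Lipschitz kernel in $Z$ and projecting onto $X_n$. Apply $\CD^*(K,N)$ on each $X_n$ to produce $\ppi_n \in \gopt(\mu_0^n,\mu_1^n)$ satisfying \eqref{eq:CD-def}. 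Uniform support bounds plus uniform doubling yield tightness of $\{(\iota_n)_\sharp \ppi_n\}$ in $\prob{\geo(Z)}$, so I extract a narrow limit $\ppi_\infty$, which is concentrated on $\geo(X_\infty)$ and lies in $\gopt(\mu_0^\infty,\mu_1^\infty)$. I then pass to the limit in \eqref{eq:CD-def} using joint lower semicontinuity of the Rényi functional $\rho\mm \mapsto -\int\rho^{1-1/N'}\,\d\mm$ with respect to simultaneous narrow convergence of $\rho\mm$ and $\mm$ (a consequence of the concavity of $s\mapsto s^{1-1/N'}$), together with the continuity of the right-hand side of \eqref{eq:CD-def}, which is the integral against $\ppi_n$ of a bounded continuous function of $\gamma$.

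For infinitesimal Hilbertianity under uniform $\CD^*(K,N)$ I would use the equivalent characterization via quadraticity of the Cheeger energy $\mathrm{Ch}$, combined with Mosco convergence of the Cheeger energies along the sequence. Viewing each $\mathrm{Ch}_n$ as a functional on $L^2(Z,(\iota_n)_\sharp\mm_n)$, the uniform $\CD^*(K,N)$ bounds provide uniform doubling and a uniform weak Poincaré inequality, which together give both Mosco inequalities: lower semicontinuity of minimal relaxed upper gradients yields $\Gamma$-$\limi$, while suitable Lipschitz/Hopf--Lax approximations transported through $Z$ furnish recovery sequences for $\Gamma$-$\lims$. Once Mosco convergence is established, the parallelogram identity $\mathrm{Ch}(f+g)+\mathrm{Ch}(f-g)=2\mathrm{Ch}(f)+2\mathrm{Ch}(g)$ is inherited by $\mathrm{Ch}_\infty$: pick recovery sequences $f_n\to f$ and $g_n\to g$; then $f_n\pm g_n$ are admissible competitors for $f\pm g$; combining the quadratic identity available for each $\mathrm{Ch}_n$ with the convergence of both sides yields the identity in the limit. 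Hence $(X_\infty,\sfd_\infty,\mm_\infty)$ is infinitesimally Hilbertian.

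The main obstacle is setting up a coherent notion of weak/strong convergence for $L^2$-functions across spaces with varying base measures, needed to make sense of the Mosco convergence step. This is the technical heart of the theorem and relies on the common ambient space $Z$ together with the uniform doubling and Poincaré estimates coming from $\CD^*(K,N)$; it is precisely the framework developed in \cite{GigliMondinoSavare}. Closedness of the subclass inside $\MM_{C(\cdot)}$ then combines with the compactness of Proposition \ref{prop:comp} to yield the asserted compactness, after noting that the normalization condition $\int_{B_1(\bar x)}(1-\sfd(\cdot,\bar x))\,\d\mm=1$ passes to p-mGH limits by narrow convergence applied to the bounded continuous function $(1-\sfd(\cdot,\bar x))^+\mathbf{1}_{B_1(\bar x)}$.
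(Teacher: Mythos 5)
The paper does not give its own proof of Theorem~\ref{thm:stab}: it states the result and refers to \cite{GigliMondinoSavare} (with \cite{Ambrosio-Gigli-Savare11b} as the $N=\infty$ precedent). Your sketch matches the argument in those references in both structure and tools: stability of $\CD^*(K,N)$ by extracting a narrow limit of optimal dynamical plans in a common ambient space and using joint lower semicontinuity of the R\'enyi entropy, and stability of infinitesimal Hilbertianity by Mosco convergence of the Cheeger energies (viewed via $L^2$ along the varying measures in $Z$), which transports the parallelogram identity to the limit. One caveat worth flagging: you describe the right-hand side of \eqref{eq:CD-def} as ``the integral against $\ppi_n$ of a bounded continuous function of $\gamma$,'' but $\rho_0^{-1/N'}(\gamma_0)$ and $\rho_1^{-1/N'}(\gamma_1)$ are neither bounded nor continuous in general, and the densities themselves depend on $n$; the standard remedy (used in Sturm's scheme) is to first prove \eqref{eq:CD-def} for a dense class of marginals with piecewise-constant densities and then approximate, so that the right-hand side becomes tractable under the simultaneous convergences $\ppi_n\to\ppi_\infty$ and $\mm_n\to\mm_\infty$. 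With that adjustment, your outline is the correct one.
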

\begin{theorem}[Splitting]\label{thm:splitting}
 Let $(X,\sfd,\mm)$ be an infinitesimally Hilbertian $\CD^*(0,N)$-space with $1 \le N < \infty$. Suppose that
 $\supp(\mm)$ contains a line. Then $(X,\sfd,\mm)$ is isomorphic to $(X'\times \R, \sfd'\times \sfd_E,\mm'\times \LL_1)$,
where $\sfd_E$ is the Euclidean distance, $\LL_1$ the Lebesgue measure and  $(X',\sfd',\mm')$ is an infinitesimally Hilbertian $\CD^*(0,N-1)$-space if $N \ge 2$ and a singleton if $N < 2$.
\end{theorem}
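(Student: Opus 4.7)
The plan is to follow the classical Cheeger--Gromoll strategy, implemented in the metric--measure setting via the first-order calculus and Bochner inequality available on infinitesimally Hilbertian $\CD^*(0,N)$-spaces. Let $\gamma\colon\R\to\supp(\mm)$ be the line hypothesized to exist. Define the two Busemann functions
\[
b^{\pm}(x) := \lim_{t\to +\infty}\bigl(t-\sfd(x,\gamma(\pm t))\bigr),
\]
which are well-defined by monotonicity of the net under the triangle inequality, are $1$-Lipschitz, and satisfy $b^{+}+b^{-}\ge 0$ with equality along $\gamma$. The central claim is that $b := b^{+}$ is \emph{affine with unit slope}: namely $\Delta b = 0$, $|\nabla b|=1$ $\mm$-a.e., and the gradient flow of $b$ is an isometric $\R$-action on $X$.

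To prove harmonicity I would invoke the Laplacian comparison theorem valid on $\CD^*(0,N)$-spaces to show that each $b^{\pm}$ is a distributional supersolution of $\Delta u=0$, where $\Delta$ is the linear Laplace operator provided by infinitesimal Hilbertianity. Since $b^{+}+b^{-}$ is then nonnegative, superharmonic, and vanishes on $\gamma$, a strong maximum principle (which on essentially non-branching infinitesimally Hilbertian $\CD^*$-spaces can be established via the Laplacian comparison applied to a relative entropy or via the $W^{1,2}$ calculus) forces $b^{+}+b^{-}\equiv 0$. Thus $b$ is both sub- and superharmonic, hence harmonic. Feeding $\Delta b = 0$ into the dimensional Bochner inequality
\[
\Delta\frac{|\nabla b|^2}{2}\ge\frac{(\Delta b)^2}{N}+\nabla b\cdot\nabla\Delta b + K|\nabla b|^2 = 0
\]
(with $K=0$), one finds that $|\nabla b|^2$ is subharmonic; since $|\nabla b|\le 1$ with equality along $\gamma$, the maximum principle yields $|\nabla b|\equiv 1$ $\mm$-a.e. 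Saturation throughout Bochner then propagates via its self-improved form (the mechanism underlying the equivalence between Bochner and $\RCD$) to yield the rigidity statement: the gradient flow $\Phi_t$ of $b$ is a one-parameter group of measure-preserving isometries of $(X,\sfd,\mm)$, and for $\mm$-a.e.\ $x$ the curve $t\mapsto \Phi_t(x)$ is a unit-speed geodesic realising $b(\Phi_t(x))=b(x)+t$.

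With $\Phi$ in hand, the splitting is constructed by setting $X':=b^{-1}(0)$, equipped with the restricted distance $\sfd'$ and the measure $\mm'$ obtained by disintegrating $\mm$ along the level sets of $b$. The map $(y,t)\mapsto \Phi_t(y)$ is then verified to be an isomorphism of p.m.m.s.\ from $(X'\times\R,\sfd'\times\sfd_E,\mm'\times\LL_1)$ onto $(X,\sfd,\mm)$, using $|\nabla b|=1$ to identify vertical with Euclidean distance, and the isometric nature of the flow to identify horizontal with $\sfd'$. Infinitesimal Hilbertianity and $\CD^*(0,N-1)$ on $X'$ follow from a tensorization-style argument run in reverse: since $X$ factors as $X'\times\R$ and carries a Hilbert Sobolev space together with $\CD^*(0,N)$, the factor $X'$ inherits the corresponding one-dimension-lower structure; when $N<2$, the Bishop--Gromov inequality in dimension $N-1<1$ forces $X'$ to be a point.

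The hardest step, and the technical heart of the argument, is extracting product structure from the saturated Bochner inequality: translating ``$b$ has vanishing Hessian in the weak sense coming from the equality case'' into the statement that its gradient flow exists globally and acts by measure-preserving isometries whose orbits foliate $X$. This requires a delicate interplay between the Sobolev calculus of \cite{Ambrosio-Gigli-Savare11b}, the self-improvement of Bochner from \cite{AGSBaEm,AMS,Erbar-Kuwada-Sturm13}, and the essentially non-branching property of \cite{RajalaSturm} to track characteristics of $b$ without leakage of mass between level sets.
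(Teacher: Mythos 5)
The paper does not prove this theorem: it is imported as Theorem \ref{thm:splitting} from \cite{Gigli13}, where it occupies an entire research article. Your outline does follow the strategy actually used there --- Busemann functions, Laplacian comparison to obtain harmonicity, the dimensional Bochner inequality to obtain $|\nabla b|\equiv 1$ and a weakly vanishing Hessian, and the gradient flow of $b$ as the translation factor --- so the architecture is the right one. But as a proof it has genuine gaps, and at least one step would fail as written.

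First, a sign slip: with your definition $b^{\pm}(x)=\lim_{t\to+\infty}\bigl(t-\sfd(x,\gamma(\pm t))\bigr)$ the triangle inequality gives $b^{+}+b^{-}\le 0$ (not $\ge 0$), and Laplacian comparison makes the $b^{\pm}$ \emph{sub}harmonic rather than superharmonic; the maximum-principle argument still runs, but only after the signs are repaired. More seriously, the step ``$|\nabla b|^{2}$ is subharmonic, bounded by $1$, and equal to $1$ along $\gamma$, hence $\equiv 1$'' cannot be executed as stated: $|\nabla b|^{2}$ is an $L^{\infty}$ object defined only $\mm$-a.e., and the line $\gamma$ is $\mm$-null, so ``equality along $\gamma$'' is not usable data for any maximum principle. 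In \cite{Gigli13} the identity $|\nabla b|=1$ $\mm$-a.e.\ is instead extracted from $b^{+}=-b^{-}$ together with the behaviour of the gradient flows of $b^{\pm}$ and Kantorovich duality, not from a pointwise evaluation on the line. Finally, the decisive step --- passing from the saturated Bochner inequality to the existence of the gradient flow of $b$ as a one-parameter group of measure-preserving isometries inducing the product decomposition --- is precisely the content of the theorem; you correctly flag it as the technical heart but supply no argument. Carrying it out requires the second-order calculus of \cite{Gigli12} (measure-valued Laplacians, the weak Hessian, control of $\sfd(\Phi_t(x),\Phi_t(y))$ along the flow), and the self-improvement results you invoke do not by themselves produce an isometric flow. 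Likewise, ``Bishop--Gromov in dimension $N-1<1$'' is not a proof that $X'$ is a singleton when $N<2$. As it stands the proposal is a correct roadmap of the known proof rather than a proof.
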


Notice that for the particular case $K=0$ the $\CD^*(0,N)$ condition is the same as the $\CD(0,N)$ one. Also, in the statement of the splitting theorem, by `line' we intend an isometric embedding of $\R$.

Notice that Theorem \ref{thm:stab} and properties \eqref{eq:cdinv} ensure that for any $K,N$ we have that
\begin{equation}
\label{eq:tancd}
\begin{split}
&\text{If }(X,\sfd,\mm)\text{ is an infinitesimally Hilbertian $\CD^*(K,N)$-space and $x\in X$ we have}\\
&\text{that every $(Y,\sfd,\nn,y)\in\Tan(X,\sfd,\mm,x)$ is infinitesimally Hilbertian and $\CD^*(0,N)$.}
\end{split}
\end{equation}

\section{Proof of the main result}\label{sec:proof}

We will first show that at almost every point in a $\CD^*(K,N)$-space there exist a geodesic for which the point is an interior point.
After that, we prove that iterated tangents of $\CD^*(K,N)$-spaces are still tangents of the original space (actually we prove this part in the slightly more general framework of m.m.s. satisfying \eqref{eq:mmdoubl} ).
Finally,  we use the interior points of geodesics and iterated tangents together with the splitting theorem
(Theorem \ref{thm:splitting}) to conclude the proof of Theorem \ref{thm:euclideantangents}.

\subsection{Prolongability of geodesics}
The following result is a simple consequence of the definition of $\CD^*(K,N)$-space. The same argument was used in
\cite{Gigli12b} and \cite{R2011}, which were in turn inspired by some ideas in \cite{LV2007}. 
\begin{lemma}[Prolongability of geodesics]\label{lma:prolonging}
 Let $K \in \R$, $ N \in[1, \infty)$ and  $(X,\sfd,\mm)$ be  a $\CD^*(K,N)$-space that is not a singleton.
 Then at $\mm$-almost every $x \in X$ there exists a non-constant geodesic $\gamma \in \geo(X)$ so that $\gamma_\frac{1}{2} = x$.
\end{lemma}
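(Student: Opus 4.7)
I would argue by contradiction via the Lebesgue differentiation theorem, which is available since $\CD^*(K,N)$ implies that $\mm$ is boundedly doubling. Let
\[
M := \{\gamma_{1/2}\,:\,\gamma \in \geo(X),\ \gamma\text{ non-constant}\},\qquad A := X\setminus M.
\]
The aim is to exhibit a constant $\delta_0 = \delta_0(K,N) > 0$ such that for every $x \in X$ and every sufficiently small $\eps > 0$,
\[
\mm(M \cap B_\eps(x)) \geq \delta_0\,\mm(B_\eps(x)).
\]
No $x \in A$ can then be a Lebesgue density point of $A$, so $\mm(A) = 0$.

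\smallskip

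\noindent To produce such a uniform lower bound, fix $x \in X$. Since $(X,\sfd)$ is geodesic and not a singleton, reparametrizing a geodesic from $x$ to any $z \neq x$ yields a point $y$ with $\sfd(x,y) = \eps$ for every $\eps < \sfd(x,z)$. Set $r := \eps/4$ and use the normalized restrictions
\[
\mu_0 := \frac{\mm\res B_r(x)}{\mm(B_r(x))},\qquad \mu_1 := \frac{\mm\res B_r(y)}{\mm(B_r(y))}
\]
as input to the $\CD^*(K,N)$ condition with $N' = N$, obtaining $\ppi \in \gopt(\mu_0,\mu_1)$ that satisfies \eqref{eq:CD-def}. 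Since $2r < \sfd(x,y)$ the two supports are disjoint, so every $\gamma \in \supp\ppi$ is non-constant and $\gamma_{1/2} \in M$; the triangle inequality additionally yields $\supp (\e_{1/2})_\sharp\ppi \subset \overline{B_\eps(x)}$.

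\smallskip

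\noindent The decisive step is to convert the $\CD^*$ inequality into a quantitative volume estimate. Write $(\e_{1/2})_\sharp\ppi = \rho_{1/2}\,\mm + \mu_{1/2}^s$; the preceding observations imply $\rho_{1/2} = 0$ $\mm$-a.e.\ on $X\setminus(M\cap \overline{B_\eps(x)})$. Substituting the explicit uniform densities into \eqref{eq:CD-def} and using that $\sigma^{(1/2)}_{K,N}(\theta) \geq c(K,N) > 0$ uniformly for $\theta$ in the range $[\eps/2,3\eps/2]$ (valid for $\eps$ small in terms of $K,N$), the right-hand side becomes at least $c\,\mm(B_r(x))^{1/N}$. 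Hölder's inequality with exponents $N/(N-1)$ and $N$ on $\{\rho_{1/2} > 0\}$, together with $\int\rho_{1/2}\,\d\mm \leq 1$, bounds the left-hand side by $\mm(M \cap B_\eps(x))^{1/N}$, so
\[
\mm(M \cap B_\eps(x)) \geq c^N\,\mm(B_r(x)) \geq c^N c_d\,\mm(B_\eps(x)),
\]
the last inequality being a twofold application of the doubling property. The main obstacle I anticipate is the routine but delicate bookkeeping of the distortion coefficient $\sigma^{(1/2)}_{K,N}$ (which must be kept bounded away from $0$ on the relevant range) and the handling of the degenerate case $N=1$, where the Hölder step must be adapted; the core idea, however, is a direct exploitation of the $\CD^*$ inequality, as in \cite{LV2007,R2011,Gigli12b}.
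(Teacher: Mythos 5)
Your argument is correct and takes a genuinely different route from the paper's. The paper fixes one ball $B_R(x_0)$, sets $\mu_0$ to be the normalized restriction of $\mm$ to it and $\mu_1 = \delta_{x_0}$, and lets $t \downarrow 0$ in the $\CD^*$ inequality: the right-hand side converges to $\mm(B_R(x_0))^{1/N}$, Jensen bounds the left-hand side by $\mm(E_t)^{1/N}$ where $E_t := \{\rho_t > 0\} \subset B_R(x_0)$, so $\mm(E_t) \to \mm(B_R(x_0))$ and $\mm$-a.e.\ $x \in B_R(x_0)$ lies in some $E_t$; reparametrizing the transport geodesic through such $x$ finishes, and one covers $X$ by countably many balls. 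You instead localize at scale $\eps$, pick two disjoint balls of radius $\eps/4$ around $x$ and a point at distance $\eps$, evaluate the inequality at the single time $t = 1/2$, and extract a uniform lower density bound for the midpoint set $M$ so that no point of $X\setminus M$ can be a Lebesgue density point. What your variant buys: both marginals are absolutely continuous, so you avoid the standard (but slightly annoying) convention needed to interpret $\rho_1^{-1/N'}$ when $\mu_1$ is a Dirac; you also get a quantitative density estimate rather than a purely qualitative statement. What it costs: you invoke the Lebesgue density theorem for doubling measures, which requires observing that $M$ (an analytic set, being the continuous image of the Polish space of non-constant geodesics) is universally measurable — a point you do not mention but which is easy. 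The two issues you flag are indeed routine: $\sigma^{(1/2)}_{K,N'}(\theta)\to 1/2$ as $\theta\to0$ gives the uniform lower bound for $\eps$ small depending only on $K,N'$, and the case $N < 2$ is handled simply by taking $N' = \max(N,2)$, which the $\CD^*$ definition permits. Finally, a trivial bookkeeping slip: your concluding display should have $\mm(M\cap\overline{B_\eps(x)})$ to match the inclusion established two lines earlier (in fact the strict triangle inequalities already give $\gamma_{1/2}\in B_\eps(x)$, so the open ball is also fine once you say so).
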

\begin{proof}
Take $x_0 \in X$ and $R > 0$. 
Define $\mu_0 = \frac{1}{\mm(B_R(x_0))}\mm\res B_R(x_0)$ and $\mu_1 = \delta_{x_0}$.
Let $\ppi \in \gopt(\mu_0,\mu_1)$ be the measure satisfying \eqref{eq:CD-def}. With the notation of \eqref{eq:CD-def} we then have
\[
-\int\rho_t^{1-\frac1{N}}\,\d\mm\leq - \int \sigma^{(1-t)}_{K,N}(\sfd(\gamma_0,\gamma_1))\rho_0^{-\frac1{N}}\,\d\ppi(\gamma)\qquad\to\qquad - \mm(B_R(x_0))^{\frac1N},  \quad \textrm{as }t \downarrow 0.
\]
Let us write $E_t := \{x \in X \,:\, \rho_t(x)>0\}$. By Jensen's inequality we get 
 \[
 - \int \rho_t^{1-\frac1N}\,\d\mm =-\int_{E_t} \rho_t^{1-\frac1N}\,\d\mm 
                               \ge -\mm(E_t)\left(\frac1{\mm(E_t)}\int_{E_t} \rho_t \,\d\mm\right)^{1-\frac1N}
                                \ge - \mm(E_t)^{\frac1N}.
 \]
 Since the optimal transport is performed along geodesics connecting the points of $B_R(x_0)$ to $x_0$, we have the inclusion $E_t\subset B_R(x_0)$; therefore $\mm(E_t) \to \mm(B_R(x_0))$ as $t \downarrow 0$, hence $\mm$-a.e. $x\in B_R(x_0)$  belongs to $E_t$ for some $t_x>0$. By construction, for $\mm$-a.e.  $x \in E_t$ there exists $\gamma \in \geo(X)$ and $t>0$ with $\gamma_t = x$ and $\gamma_1 = x_0$, thus  for $\mm$-a.e. $x\in B_R(x_0)$ there exists a non-constant geodesic $\gamma \in \geo(X)$ so that $\gamma_\frac{1}{2} = x$.
 The conclusion follows by covering the space $X$ with countably many balls.
\end{proof}

\subsection{Tangents of tangents are tangents}
In this subsection we adapt the celebrated theorem of Preiss \cite{P1987} of iterated tangents of measures in $\R^n$ to our setting. In particular we are
inspired  by  \cite[Theorem 1.1]{LD2011},  where Le Donne proved that for metric
spaces with doubling measure almost everywhere the tangents of tangents are tangents of the original space.  
 The difference here is that we also  include the weak convergence of measures to the notion of tangents.

\begin{theorem}[`Tangents of tangents are tangents']\label{thm:iteratedtangents}
Let  $(X,\sfd,\mm)$ be a m.m.s. satisfying \eqref{eq:mmdoubl} for some $C:(0,\infty)\to(0,\infty)$.

Then for  $\mm$-a.e.  $x \in X$ the following holds:  for any $(Y,\sfd_Y,\nn,y) \in \Tan(X,\sfd,\mm,x)$ and any $y' \in Y$ we have
 \[
  \Tan(Y,\sfd_Y,\nn^{y'}_1,y') \subset \Tan(X,\sfd,\mm,x),
 \]
 the measure $\nn^{y'}_1$ being defined as in \eqref{eq:normalization}.
\end{theorem}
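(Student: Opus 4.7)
The plan is to adapt the classical argument of Preiss \cite{P1987}, as developed in the metric setting by Le Donne \cite{LD2011}, to the present pointed measured framework. The ambient structure is the compact metric space $(\MM_{C(\cdot)}, \mathcal{D}_{C(\cdot)})$ provided by Proposition \ref{prop:comp}, and I work with the jointly continuous rescaling map $\Phi : X \times (0, 1] \to \MM_{C(\cdot)}$ defined by $\Phi(x, r) := [(X, r^{-1}\sfd, \mm^x_r, x)]$; with this notation, $\Tan(X, \sfd, \mm, x)$ is exactly the closed set of subsequential limits of $\Phi(x, r_i)$ as $r_i \downarrow 0$.

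First I would reduce the theorem to the following \emph{change-of-basepoint} claim: for $\mm$-a.e. $x \in X$, for every $(Y, \sfd_Y, \nn, y) \in \Tan(X, \sfd, \mm, x)$ and every $y' \in Y$, the re-based p.m.m.s.\ $(Y, \sfd_Y, \nn^{y'}_1, y')$ lies in $\Tan(X, \sfd, \mm, x)$. Granted this claim, the theorem follows: any $(Z, \sfd_Z, \mathfrak{l}, z) \in \Tan(Y, \sfd_Y, \nn^{y'}_1, y')$ is by definition a p-mGH limit of rescalings of $(Y, \sfd_Y, \nn^{y'}_1, y')$ at its own basepoint $y'$, and each such rescaling is in $\Tan(X, \sfd, \mm, x)$ because tangent sets are tautologically closed under rescaling at the basepoint (replacing $r_i$ by $s r_i$ in the definition of $\Phi$); closedness of $\Tan(X, \sfd, \mm, x)$ in $\MM_{C(\cdot)}$ then delivers $(Z, \sfd_Z, \mathfrak{l}, z) \in \Tan(X, \sfd, \mm, x)$.

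The core of the argument is the change-of-basepoint claim. Heuristically, if $\Phi(x, r_i) \to (Y, \sfd_Y, \nn, y)$, the very definition of p-mGH convergence yields, for any $y' \in Y$, a sequence of points $x'_i \in X$ whose isometric embeddings converge to $y'$ in a common ambient metric space; a direct continuity argument, tracking the normalization \eqref{eq:normalization} at the new basepoint, then gives $\Phi(x'_i, r_i) \to (Y, \sfd_Y, \nn^{y'}_1, y')$. The difficulty is that the basepoint has drifted from $x$ to $x'_i$: the distance $\sfd(x'_i, x) \approx r_i \sfd_Y(y', y)$ is precisely of order $r_i$, the very scale at which we are zooming, so the two basepoints remain at a non-vanishing distance in the rescaled metric, and $\Phi(x, r_i)$ and $\Phi(x'_i, r_i)$ differ even in the limit. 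To transfer the conclusion back to $x$ itself, I would fix a countable dense family $\{P_k\}_{k \in \N}$ in $\MM_{C(\cdot)}$, introduce the Borel sets $A_k := \{x \in X : P_k \in \Tan(X, \sfd, \mm, x)\}$, and invoke a Lebesgue-density argument based on the doubling condition \eqref{eq:mmdoubl}: at $\mm$-a.e. $x \in A_k$ the approximating points $x'_i$ can be chosen inside $A_k$ itself, after which a diagonal extraction produces a sequence of genuine rescalings at $x$ converging to the desired re-based tangent.

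The main obstacle is precisely this basepoint-transfer step, which is the measure-theoretic heart of the Preiss/Le Donne argument. It requires a careful use of the uniform doubling of $\mm$ to guarantee that, inside the small balls around the ``target'' location of $x'_i$, the set $A_k$ occupies enough mass for the density argument to succeed. Beyond this, the proof reduces to bookkeeping: the weak convergence of the rescaled measures $\mm^{x'_i}_{r_i}$ to $\nn^{y'}_1$ must be tracked throughout, but this causes no essential new difficulty compared to the purely metric argument of \cite{LD2011}.
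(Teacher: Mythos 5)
Your high-level diagnosis is accurate: the difficulty is the basepoint drift at the critical scale $r_i$, the remedy is a Lebesgue-density argument made possible by the doubling condition \eqref{eq:mmdoubl}, and the reduction to the ``change-of-basepoint'' statement (using that $\Tan(X,\sfd,\mm,x)$ is closed in $\MM_{C(\cdot)}$ and closed under rescaling at the basepoint) is a legitimate simplification that the paper does not bother to make explicit. However, the mechanism you propose for the change-of-basepoint step does not work, and this is a genuine gap, not mere bookkeeping.

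The problem is with the sets $A_k = \{x : P_k \in \Tan(X,\sfd,\mm,x)\}$ and the claim that ``a diagonal extraction produces a sequence of genuine rescalings at $x$.'' Suppose $x$ is a density point of $A_k$ and the auxiliary points satisfy $x'_i \in A_k$, $\sfd(x,x'_i)\lesssim r_i$. From $x'_i \in A_k$ you only get that for each $i$ there is \emph{some} scale $s_i \downarrow 0$ with $\Phi(x'_i,s_i)\approx P_k$; you have no control over the relation between $s_i$ and $r_i$. To convert a rescaling at $x'_i$ into one at $x$ you would need $\sfd(x,x'_i)/s_i \to 0$, i.e.\ $s_i \gg r_i$ — but then, at scale $s_i$, the point $y'$ (sitting at nonzero distance from $y$ in $Y$) collapses onto the basepoint and the rebased tangent disappears. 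If instead $s_i \lesssim r_i$, the basepoints $x$ and $x'_i$ remain at non-vanishing rescaled distance, which is exactly the original obstruction. So no diagonal extraction produces rescalings centered at $x$ converging to $(Y,\sfd_Y,\nn^{y'}_1,y')$; the direct construction is not available.

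The paper gets around this by \emph{not} attempting a direct construction. It fixes $k,m\in\N$ and a closed set $\mathcal U\subset\MM_{C(\cdot)}$ of $\mathcal D_{C(\cdot)}$-diameter $<(2k)^{-1}$, and considers the \emph{bad} set $A$ of points $x$ admitting a tangent $(Y,\sfd_Y,\nn,y)$ and $y'\in Y$ with $\sfd_Y(y,y')\le m$, $(Y,\sfd_Y,\nn^{y'}_1,y')\in\mathcal U$, and $\mathcal D_{C(\cdot)}\bigl((Y,\sfd_Y,\nn^{y'}_1,y'),(X,r^{-1}\sfd,\mm^x_r,x)\bigr)\ge 2k^{-1}$ for all $r<m^{-1}$. (It must also check this $A$ is Suslin, hence measurable — a point your outline does not address, and which already requires some care.) Assuming $\mm(A)>0$ and taking a density point $a\in A$, one produces $a_i\in A$ with $\Phi(a_i,r_i)\to(Y,\sfd_Y,\nn^{y'}_1,y')\in\mathcal U$. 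But $a_i\in A$ provides its \emph{own} rebased tangent $(Y_i,\sfd_i,(\nn_i)^{y'_i}_1,y'_i)\in\mathcal U$ that is $\ge 2k^{-1}$-far from $\Phi(a_i,r_i)$; since any two elements of $\mathcal U$ are $<(2k)^{-1}$-close and $\Phi(a_i,r_i)$ is eventually $<(2k)^{-1}$-close to $(Y,\sfd_Y,\nn^{y'}_1,y')$, the triangle inequality gives a contradiction. The essential point — which your proposal misses — is that the badness of the set $A$ is \emph{self-defeating}: the rebased tangent at the approximating point $a_i$ is simultaneously far from $\Phi(a_i,r_i)$ (because $a_i\in A$) and close to it (via $\mathcal U$). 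This is the trick that makes the density argument close, and it cannot be replaced by a direct construction of rescalings converging to the rebased tangent.
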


\begin{proof}
We need to prove that
\[
\begin{split}
 \mm\big(\big\{x \in X \,:\, &\text{there exist } (Y,\sfd_Y,\nn,y) \in \Tan(X,\sfd, \mm,x) \text{ and}\\
   & y' \in Y \text{ such that }(Y,\sfd_Y,\nn_{1}^{y'},y') \notin \Tan(X,\sfd,\mm,x) \big\}\big) = 0,
\end{split}
\]
This will follow if we can show that for all $k,m \in \N$ one has
\begin{align*}
 \mm\bigg(\bigg\{x \in X \,:\, &\text{there exist } (Y,\sfd_Y,\nn,y) \in \Tan(X,\sfd,\mm,x) \text{ and }y' \in Y \text{ such that }\sfd_Y(y,y')\leq m\\
       &\text{and } \mathcal D_{C(\cdot)}\left((Y,\sfd_Y,\nn^{y'}_{1},y'),(X,r^{-1}\sfd,\mm^x_r,x)\right) \geq2 k^{-1} \text{ for all }r \in (0,m^{-1})\bigg\}\bigg) = 0.
\end{align*}
Fix  $k,m \in \N$ and notice that since $(\MM_{C(\cdot)},\mathcal D_{C(\cdot)})$ is compact, it is also separable. Hence it is sufficient to show that for any closed set $\mathcal U \subset \MM_{C(\cdot)}$ with $\diam_{\mathcal D_{C(\cdot)}}(\mathcal U) < (2k)^{-1}$ the set
\begin{align*}
A =  \bigg\{x \in &X \,:\, \exists\ (Y,\sfd_Y,\nn,y) \in \Tan(X,\sfd,\mm,x) \text{ and }y' \in Y \text{ such that }(Y,\sfd_Y,\nn^{y'}_{1},y') \in \mathcal U,\\
       &\sfd_Y(y,y')\leq m \text{ and }\mathcal D_{C(\cdot)}\left((Y,\sfd_Y,\nn^{y'}_{1},y'),(X,r^{-1}\sfd,\mm^x_r,x)\right) \geq 2k^{-1} \ \forall r \in (0,m^{-1})\bigg\},
\end{align*}
has $\mm$-measure zero.  We start by proving that $A$ is Suslin, and thus $\mm$-measurable. To this aim, let $\mathcal A\subset X\times\MM_{C(\cdot)}$ be given by the couples $\big(x,(Y,\sfd_Y,\nn,y)\big)$ with $(Y,\sfd_Y,\nn,y)\in \Tan(X,\sfd,\mm,x)$ and recall that for every $r\in\R$ the map $X\ni x\mapsto(X,r\sfd,\mm^x_{1/r},x)\in (\MM_{C(\cdot)},\mathcal D_{C(\cdot)})$ is continuous. Thus the set $\cup_{x\in X}\{x\}\times B_{1/i}(X,r\sfd,\mm^x_{1/r},x)\subset X\times\MM_{C(\cdot)}$ is open and hence the identity
\[
\mathcal A=\bigcap_{i\in\N}\bigcap_{j\in\N}\bigcup_{r\geq j}\bigcup_{x\in X}\{x\}\times B_{1/i}(X,r\sfd,\mm^x_{1/r},x)
\]
shows that $\mathcal A\subset X\times\MM_{C(\cdot)}$ is Borel. Next notice that the set $\mathcal B\subset\MM_{C(\cdot)}$ defined by
\[
\begin{split}
\mathcal B:= \mathcal U\cap\Big\{(Y,\sfd_Y,\nn,\bar y) \ :\ \mathcal D_{C(\cdot)}\big((Y,\sfd_Y,\nn,\bar y),(X,r^{-1}\sfd,\mm^x_r,x)\big)\geq 2k^{-1},\ \forall r \in (0,m^{-1})\Big\},
\end{split}
\] 
is closed. Then using the fact that spaces in $\MM_{C(\cdot)}$ are proper it is easy to deduce that the set $\mathcal C\subset\MM_{C(\cdot)}$ given by
\[
\mathcal C:=\Big\{(Y,\sfd_Y,\nn,\bar y) \ :\ \exists y'\in Y\text{ such that }\sfd_Y(y',\bar y)\leq m\text{ and }(Y,\sfd_Y,\nn^{y'}_1,y')\in \mathcal B\Big\},
\]
is closed as well. Hence, as $A$ is the projection on the first factor of $\mathcal A\cap (X\times\mathcal C)$, it is Suslin, as claimed.

Now we proceed by contradiction and assume that for some  $k,m,\mathcal U$ and $A$ as above one has $\mm(A)>0$. Let $a \in A$ be an $\mm$-density point of $A$, i.e. 
\begin{equation}\label{eq:amDP}
\lim_{r\downarrow 0} \frac{\mm(A\cap B_r(a))}{\mm(B_r(a))}=1.
\end{equation} 
Since $a \in A$, there exist $(Y, \sfd_Y,\nn,y) \in \Tan(X,\sfd,\mm,a)$
and $y' \in Y$ such that $(Y,\sfd_Y,\nn^{y'}_{1},y') \in \mathcal U$ and the fact that  $(Y, \sfd_Y,\nn,y) \in \Tan(X,\sfd,\mm,a)$ grants the existence of a sequence $r_i \downarrow 0$ such that
\begin{equation}\label{eq:XriatoY}
  (X,r_i^{-1}\sfd,\mm^a_{r_i},a) \to (Y, \sfd_Y,\nn,y) \quad \text{p-mGH}.
\end{equation}
Let $(Z,\sfd_Z)$ be the separable metric space and $\iota_i$, $i\in \N\cup\{\infty\}$, the isometric embeddings given by the
definition of p-mGH convergence in \eqref{eq:XriatoY}.
It is then immediate to check directly from Definition \ref{def:conv} that there exists a sequence $\{x_i\}_{i \in \N}\subset X$  such that
\begin{equation}\label{eq:xitoy'}
\lim_{i\to \infty}\sfd_Z(\iota_i(x_i),\iota_\infty(y'))=0. 
\end{equation}
Notice that we have
\[
\begin{split}
\lims_{i\to\infty}\sfd_Z(\iota_i(x_i),\iota_i(a))&\leq\lims_{i\to\infty}\sfd_Z(\iota_i(x_i),\iota_\infty(y'))+\sfd_Z(\iota_\infty(y'),\iota_\infty(y))+\lims_{i\to\infty}\sfd_Z(\iota_\infty(y),\iota_i(a))\\
&=\sfd_Z(\iota_\infty(y'),\iota_\infty(y)),
\end{split}
\]
and thus by the definition of rescaled metrics we get
\begin{equation}\label{eq:d(a,xi)}
\sfd (a,x_i) \leq C' r_i,\qquad\forall i\in\N,
\end{equation} 
for some constant $C'>0$.

\textbf{Claim}: \emph{There exists a sequence $\{a_i\}_{i \in \N} \subset A \subset X$ such that} 
\begin{equation}\label{eq:claim}
 \lim_{i\to \infty} \sfd_Z(\iota_i(a_i),\iota_\infty(y')) = 0.
\end{equation}
\emph{Proof of the claim}. Here we use the fact that $\mm$ is locally doubling. This assumption is needed to deduce that for every $\varepsilon>0$ there exists $i_0 \in \N$
 such that for every $i\geq i_0$ we have  $A\cap B_{\varepsilon r_i} (x_i)\neq\emptyset$.
 Indeed if it is not the case, there exists  $\varepsilon_0>0$ such that it holds
\begin{equation}\label{eq:AcapB}
A  \cap  B_{\varepsilon_0 r_i} (x_i) = \emptyset,\qquad\forall i\in J,
\end{equation} 
for some infinite set of indexes  $J\subset\N$. Up to choosing a smaller $\varepsilon_0$,  \eqref{eq:d(a,xi)} implies that
$B_{\varepsilon_0 r_i} (x_i) \subset B_{2 C' r_i} (a)$ and so by the %generalized doubling property
estimate \eqref{eq:genDoub} we get 
\begin{equation}\label{geq:mBe}
\mm( B_{\varepsilon_0 r_i} (x_i)) \geq  C'' \mm(B_{2 C r_i} (a)),\qquad\forall i\in \N,
\end{equation}
for some constant $C''$ independent on $i$ (but possibly depending on all other objects).
Combining \eqref{eq:AcapB}, \eqref{geq:mBe}  we get
\begin{equation} \nonumber
\mm(A\cap B_{2C r_i}(a)) \leq \mm( B_{2C r_i}(a) \setminus B_{\varepsilon_0 r_i} (x_i)) \leq (1-C'') \mm(B_{2 C r_i} (a)),\qquad\forall i\in J,
\end{equation}
and thus
\begin{equation*}
\limi_{i\to \infty} \frac{\mm(A\cap B_{2C r_i}(a))}{\mm(B_{2C r_i}(a))} \leq   1-C'' < 1
\end{equation*}
contradicting that $a$ is an $\mm$-density point of $A$, namely \eqref{eq:amDP}.

Hence for every $\eps>0$ eventually it holds $A\cap B_{\eps r_i}(x_i)\neq\emptyset$. With a diagonalization argument we can then find a sequence $(a_i)\subset A$ such that $\lim_{i\to\infty} r_i^{-1}\sfd(a_i,x_i)=0$. Recalling that  $\sfd_Z(\iota_i(a_i),\iota_i(x_i))=r_i \sfd(a_i,x_i)$, our claim \eqref{eq:claim} follows from \eqref{eq:xitoy'}.% and \eqref{eq:d(a,xi)}.

\bigskip

By \eqref{eq:claim} and directly from the definition of p-mGH convergence, using the same space $(Z,\sfd_Z)$ and the same embeddings $\iota_i$ granting the convergence in \eqref{eq:XriatoY} we deduce that
\begin{equation*}%\label{eq:est2}
(X,r_i^{-1} \sfd, \mm^a_{r_i}, a_i) \to (Y,\sfd_Y,\nn, y') \quad \text{p-mGH}.
\end{equation*}
Since in the normalization \eqref{eq:normalization} we use functions of the form
 $\chi_{B_r(\bar x)}(\cdot) \; (1-\sfd_Z(\cdot, \bar x)) \in C_b$, from weak convergence it follows that
\begin{equation*}%\label{eq:est2a}
(X,r_i^{-1} \sfd, \mm^{a_i}_{r_i}, a_i) \to (Y,\sfd_Y,\nn_1^{y'}, y') \quad \text{p-mGH},
\end{equation*}
and thus for $i$ large enough  
\begin{equation}\label{eq:est3}
\mathcal D_{C(\cdot)}\big((X,r_i^{-1} \sfd, \mm^{a_i}_{r_i}, a_i), (Y,\sfd_Y,\nn^{y'}_{1}, y')\big) \leq \frac{1}{2k}.
\end{equation} 

Since by construction we have $a_i \in A$, there exist spaces $(Y_i,\sfd_i,\nn_i,y_i) \in \Tan(X,\sfd,\mm,a_i)$ and points $y'_i \in Y_i$
such that $(Y_i,\sfd_i,(\nn_i)_{1}^{y'_i},y'_i) \in \mathcal U$ and
\begin{equation}\label{eq:y'i}
 \mathcal D_{C(\cdot)}\left((Y_i,\sfd_i,(\nn_i)_{1}^{y'_i},y'_i),(X,r^{-1}\sfd,\mm^{a_i}_r, a_i)\right) \geq \frac 2 k\quad \text{ for all } r \in (0,m^{-1}),
\end{equation}
where $(\nn_i)_{1}^{y'_i}$ is the normalization of the measure $\nn_i$ at $y_i'$ as in \eqref{eq:normalization}.

Therefore by combining the bound $\diam_{\mathcal D_{C(\cdot)}}(\mathcal U)\leq (2k)^{-1}$ with \eqref{eq:est3} and  \eqref{eq:y'i},  for sufficiently large $i$  we have
\begin{align*}
 \frac2k & \leq  \mathcal D_{C(\cdot)}\left((Y_i,\sfd_i,(\nn_i)_1^{y'_i},y'_i),(X,r_i^{-1}\sfd,\mm^{a_i}_{r_i},a_i)\right)\\
& \leq \mathcal D_{C(\cdot)}\left((Y_i,\sfd_i,(\nn_i)_{1}^{y'_i},y'_i),(Y, \sfd_Y,\nn^{y'}_{1},y')\right) + \mathcal D_{C(\cdot)}\left((Y, \sfd_Y,\nn^{y'}_{1},y'),(X,r_i^{-1}\sfd,\mm^{a_i}_{r_i},a_i)\right) \\
& \leq  \diam_{\mathcal D_{C(\cdot)}}(\mathcal U) + \mathcal D_{C(\cdot)}\left((Y, \sfd_Y,\nn^{y'}_{1},y'),(X,r_i^{-1}\sfd,\mm^{a_i}_{r_i},a_i)\right) \le \frac1{2k} + \frac1{2k} = \frac1k,
\end{align*}
which is a contradiction. 
\end{proof}

\subsection{Iterating tangents to conclude}
\emph{Proof of Theorem \ref{thm:euclideantangents}}
Let $Z \subset X$ be the set of full $\mm$-measure where both Theorem \ref{thm:iteratedtangents} and Lemma \ref{lma:prolonging} hold, and fix $x \in Z$. We will prove that there exists a tangent space to $x$ isomorphic to $(\R^n,\sfd_E,\LL_n,0)$ for some $n\leq N$.

Thanks to  Lemma \ref{lma:prolonging} there exists a non-constant geodesic $\gamma \in \geo(X)$ so that $\gamma_\frac{1}{2} = x$, therefore every tangent $(Y_1,\sfd_1,\nn_1,y_1) \in \Tan(X,\sfd, \mm,x)$ contains an isometric image of $\R$ going through the point $y_1$. 

As a tangent of an infinitesimally Hilbertian $\CD^*(K,N)$-space, $(Y_1,\sfd_1,\nn_1)$ is an infinitesimally Hilbertian $\CD^*(0,N)$-space (property \eqref{eq:tancd})
and so by Theorem \ref{thm:splitting} it splits into $(\R \times X_1, \sfd_E\times \sfd'_1,\LL_1\times \mm_1)$ with $(X_1,\sfd'_1,\mm_1)$ infinitesimally Hilbertian $\CD^*(0,N-1)$-space.

If $X_1$ is not a singleton, it contains a point $x_1$ where again both Theorem \ref{thm:iteratedtangents} and Lemma \ref{lma:prolonging}
can be used. Therefore, again by Theorem \ref{thm:splitting}, every tangent space $(Y_2,\sfd_2,\nn_2,y_2) \in \Tan(X_1,\sfd_1,\mm_1,x_1)$
 splits as $(\R \times X_2, \sfd_E\times \sfd'_2,\LL_1\times \mm_2)$ with $(X_2,\sfd'_2,\mm_2)$ infinitesimally Hilbertian $\CD^*(0,N-2)$-space.

By Theorem  \ref{thm:splitting}  this process can be iterated at most $\text{`integer part of $N$'}$-times before producing a space $X_n$. Tracing back the lines that have been factorized we conclude that  $(\R^n,\sfd_E,\LL_n,0) \in \Tan(X,\sfd,\mm,x)$ for some $n\leq N$, as desired.
{\penalty-20\null\hfill$\square$\par\medbreak}

\def\cprime{$'$}

\end{document}